\numberwithin{equation}{section}
\newtheorem{theorem}{Theorem}[section]
\newtheorem{lemma}[theorem]{Lemma}
\journal{***}
\begin{document}
	\begin{frontmatter}
		\title{On  topological solutions to a generalized Chern-Simons equation on lattice graphs}
		\author{Songbo Hou \corref{cor1}}
		\ead{housb@cau.edu.cn}
		\address{Department of Applied Mathematics, College of Science, China Agricultural University,  Beijing, 100083, P.R. China}
		\author{Xiaoqing Kong}
		\ead{kxq@cau.edu.cn}
		\address{Department of Applied Mathematics, College of Science, China Agricultural University,  Beijing, 100083, P.R. China}
		
		\cortext[cor1]{Corresponding author: Songbo Hou}
		\begin{abstract}

	For \(n \geq 2\), consider \(\mathbb{Z}^n\) as a lattice graph. We explore a generalized  Chern-Simons equation on \(\mathbb{Z}^n\). Employing the method of exhaustion, we prove that  there exists a global solution that also qualifies as a topological solution. Our results extend those of Hua et al. [arXiv:2310.13905] and complement the findings of Chao and Hou [J. Math. Anal. Appl. $\bf{519}$(1), 126787(2023)], as well as those of Hou and Qiao [J. Math. Phys. $\bf{65}$(8), 081503(2024)].

		\end{abstract}	
		\begin{keyword} Chern-Simons equation \sep lattice graph \sep topological solution\sep maximal  solution
			\MSC [2020] 35A01  35A16 35J91 35R02
		\end{keyword}
	\end{frontmatter}
	
	\section{Introduction}
	
	The Chern-Simons theory was originally formulated by Shiing-Shen Chern and  James Simons in 1974. This theory was initially developed within the field of mathematics to study the geometric structures on three-dimensional manifolds. Later, it found broad applications in physics, particularly in quantum physics and condensed matter physics. The theory quickly gained significance in understanding topological phase transitions, especially in the context of quantum Hall effects, topological insulators, and high-temperature superconductors \cite{MR0855219,MR1068089,MR2793600}.
	
	 Substantial studies have been undertaken
	  on the  dynamic models of the Chern-Simons type in  the field theory,  referenced in works like \cite{lin2011non,nam2013vortex,lin2009vortex} among others.
	From a mathematical perspective, the dynamical equations for different Chern-Simons frameworks
	 pose significant analytical challenges, even in scenarios involving radial symmetry and static conditions \cite{kumar1986charged}.   In  the Abelian Chern-Simons model, the self-dual structure was discovered  in \cite{MR1050529, MR1050530} and  has catalyzed extensive further investigation. Many problems of existence can be transformed into studies of elliptic partial differential equations or systems of equations, with a particular focus on exploring topological and non-topological solutions \cite{MR1429096,MR1690951,MR1946331,MR2168004,MR3390935,MR3412153}.

	Partial differential equations on discrete graphs have recently garnered significant interest and are now widely applied in diverse fields such as image processing, social network analysis, bioinformatics, and machine learning. Recent advancements have extended some traditional methods for solving partial differential equations in Euclidean spaces to the context of graphs.
	
	Pioneering research by Grigor'yan et al.~\cite{MR3542963,grigor2016kazdan,MR3665801} introduced a variational method to examine  the Kazdan-Warner equation, the Yamabe-type equation,  and other nonlinear equations. The main goal of these studies was to demonstrate the existence of solutions. For a   range of partial differential equations,  further investigations have been thoroughly investigated on graphs.
	 Notably, the existence results for Yamabe type equations have been detailed in references such as~\cite{MR3759076,MR3767372,MR3910409}. Studies addressing  Kazdan-Warner equations are found in~\cite{zhang2018p,MR3648273,ge2020p,MR4416135,MR4753528}, and the results concerning Schrödinger equations appear in~\cite{MR3833747,MR4092834,MR4561113,MR4578204}. Additionally, significant findings related to the heat equations are documented in~\cite{MR3688855,MR4607589}.

	Define \(\Delta\) as the Laplacian operator. In this research, we analyze a generalized Chern-Simons equation described by
	\begin{equation}\label{eq11}
		\Delta u = \lambda e^u (e^u - 1)^{2p+1} + 4\pi \sum_{j=1}^M n_j \delta_{p_j},
	\end{equation}
	where \(\lambda\) is a positive constant, \(p\) is a non-negative integer, \(n_i\) for \(1 \leq i \leq M\) are positive integers, and \(p_i\) for \(1 \leq i \leq M\) are specific points with \(\delta_{p_j}\) denoting the Dirac delta function at \(p_j\).
	  Within Euclidean spaces, the classification of a solution \(u(x)\) to equation \eqref{eq11} depends on its behavior at infinity: if \(u(x)\) approaches 0 as \(|x|\) increases indefinitely, it is termed topological, and if \(u(x)\) declines to \(-\infty\) as \(|x|\) increases, it termed non-topological.

For \( p = 0 \) and \( n_i = 1 \) for all \( i = 1, 2, \dots, M \), Eq.(\ref{eq11}) simplifies to
\begin{equation}\label{eqc}
	\Delta u = \lambda e^u (e^u - 1) + 4\pi \sum_{j=1}^M \delta_{p_j}.
\end{equation}
This equation was examined by Caffarelli et al. \cite{MR1324400} and Tarantello \cite{MR1400816} within a doubly periodic setting or on the 2-torus in \(\mathbb{R}^2\), confirming solution presence. Similarly, Huang et al. \cite{huang2020existence} and  Hou et al. \cite{hou2022existence} explored Eq.(\ref{eqc}) on finite graphs, establishing solution existence. For \( p = 2 \) and \( n_i = 1 \) for each \( i = 1, 2, \dots, M \), Eq.(\ref{eq11}) modifies to
\begin{equation}\label{eqs}
	\Delta u = \lambda e^u (e^u - 1)^5 + 4\pi \sum_{j=1}^M \delta_{p_j}.
\end{equation}
Han \cite{MR3033571} determined the presence of multi-vortices for Eq.(\ref{eqs}) in a similarly doubly periodic region of \(\mathbb{R}^2\). Chao et al. \cite{CHAO2023126787} and  Hu \cite{MR4725973} documented multiple solution findings for Eq.(\ref{eqs}) on finite graphs. Additional research on Chern-Simons models on graphs has been reported in \cite{lu2021existence,  MR4781956, houkong2022existence, gao2022existence, chao2022existence, li2023topological, hua2023existence, huang2021mean}.

We describe the lattice $\mathbb{Z}^n$ for $n \geq 2$ as a discrete graph denoted by $\mathbb{Z}^n = (V, E)$. Here, $V$ denotes the set of vertices and $E$ the set of edges:
\[
V = \left\{x: x = (x_1, \ldots, x_n)  \text{ where }  x_i \in \mathbb{Z} \text{ for each } 1\leq i\leq n\right\}\subseteq \mathbb{R}^n,
\]
\[
E = \left\{ xy : x, y \in V \text{ such that } d(x,y) = 1 \right\},
\] where \[
d(x, y) = \sum_{i=1}^n \left| x_i - y_i \right|.
\]

Consider a finite subset $\Omega \subset V$. The boundary, denoted $\delta \Omega$, is defined as
\[
\delta \Omega := \{ y \notin \Omega : \exists x \in \Omega , y \sim x \},
\]
and we denote by $\bar{\Omega} = \Omega \cup \delta \Omega$ the closure of $\Omega$.

	Now, we introduce  function spaces and operators on graphs to prepare for the subsequent analysis. 
	  Let \(C(V)\) denote the collection of all functions on \(V\).
	   Similarly, for a finite subset \(\Omega\) of \(V\), the set \(C(\Omega)\) is defined as the collection of functions that are defined over \(\Omega\). For any \(u\in C(V)\),  the integration of $u$ on V is defined as
	 $$\int_{V}ud\mu=\sum_{x \in V}u(x).$$  Analogously, we define 
the  \(l^q\)-norm  \((1\leq q<\infty)\) of 
	 \(u\in C(V)\) by 
	\[\|u\|_{l^q\left(V\right)}=\left(\sum_{x \in V}|u(x)|^q\right)^{\frac{1}{q}},\]
	and the \(l^{\infty}\)-norm of  \(u\in C(V)\) by 
	\[\|u\|_{l^{\infty}(V)}=\sup _{x \in V}|u(x)|.\]
Denote \(x \sim y\) whenever  \(xy\) is an edge in \(E\). 	We also define the  following   seminorm:
	$$
	|u|_{1,q}:=\left(\sum_{x \in V} \sum_{y \sim x}|u(y)-u(x)|^q\right)^{\frac{1}{q}} .
	$$
 The Laplacian operator 	for any $u\in C(V)$ is identified as
	\[
	\Delta u(x) = \sum_{y\sim x} (u(y) - u(x)).
	\]
	Furthermore, for any $u$ and $v$ on the graph, we characterize the gradient form as follows:
	$$
	\Gamma(u, v)(x):=\frac{1}{2} \sum_{y \sim x}(u(y)-u(x))(v(y)-v(x)) .
	$$
If $u = v$, this simplifies to $\Gamma(u) = \Gamma(u, u)$.

For functions $u, v$ in $C(\bar{\Omega})$, a bilinear form $E_{\Omega}(u, v)$ is specified as
\begin{equation}\label{bfe}
E_{\Omega}(u, v) := \int_{\bar{\Omega}} \Gamma(u, v) d\mu = \frac{1}{2} \sum_{\substack{x, y \in \bar{\Omega} \\ x \sim y}} (u(y) - u(x))(v(y) - v(x)),
\end{equation}
and $E_{\Omega}(u) = E_{\Omega}(u, u)$, referred to as the Dirichlet energy of $u$. Lemma 2.2 in \cite{MR3833747} indicates that for $u \in C(V)$ and $v \in C(\bar{\Omega})$ with $v$ vanishing on $\partial \Omega$, the following equality  holds:
\begin{equation}\label{gri}
\int_{\Omega \cup \partial \Omega} \Gamma(u, v) d \mu = -\int_{\Omega} \Delta u \, v \, d \mu.
\end{equation}

Let \(d(x) = d(x, 0)\) represent distance from the origin. 	In this study, we focus primarily on global solutions to the following   equation defined on \(V\):
	\begin{equation}\label{eq12}
		\left\{\begin{aligned}
			&\Delta u = \lambda e^u (e^u - 1)^{2p+1} + 4\pi \sum_{j=1}^M n_j \delta_{p_j}, \\
			&\lim_{d(x) \to +\infty} u(x) = 0.
		\end{aligned}\right.
	\end{equation}
	Our goal is to establish a topological solution which  is also  maximal.  The primary finding is presented as follows:

\begin{theorem}\label{th1}

	For \( n \geq 2 \), Eq.\eqref{eq12} provides a topological solution \( u \) in \( l^{2p+2}(V) \) on \( V \), which is also a maximal solution.
\end{theorem}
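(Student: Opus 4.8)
The plan is to use the method of exhaustion, solving the equation first on an increasing sequence of finite boxes $\Omega_k = \{x \in V : d(x) < k\}$ with Dirichlet boundary conditions, and then passing to the limit. First I would fix a reference function: since the Dirac source $4\pi\sum_{j=1}^M n_j\delta_{p_j}$ is supported on finitely many vertices, I can subtract off a function $u_0 \in C(V)$ (for instance a suitable Green's-type function, or simply the solution of $\Delta u_0 = 4\pi\sum n_j\delta_{p_j}$ on a large box extended by zero) so that $w = u - u_0$ solves a source-free equation $\Delta w = \lambda e^{u_0+w}(e^{u_0+w}-1)^{2p+1}$ on the complement of the singular set. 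The key structural observation, as in the Euclidean and finite-graph treatments, is that the nonlinearity $f(t) = \lambda e^t(e^t-1)^{2p+1}$ is increasing on a neighborhood of $0$ and satisfies $f(0)=0$, so that the constant function $0$ serves as a natural barrier.

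The core of the argument is a monotone iteration (sub/supersolution) scheme on each finite box $\Omega_k$: I would take the supersolution to be $\bar{u} \equiv 0$ (away from the $p_j$, where the Dirac forces $-\infty$ behavior, so really one works with the shifted equation) and construct a subsolution $\underline{u}_k$ on $\Omega_k$ with $\underline{u}_k \le 0$, $\underline{u}_k = 0$ on $\delta\Omega_k$, using the discrete maximum principle and the fact that $\Delta$ on a finite box with Dirichlet data is invertible with a sign-definite inverse. Standard monotone iteration then yields a solution $u_k$ of the Dirichlet problem on $\Omega_k$ with $\underline{u}_k \le u_k \le 0$. The sequence $\{u_k\}$ should be shown to be monotone in $k$ (by comparing the problems on $\Omega_k \subset \Omega_{k+1}$ via the maximum principle), and uniformly bounded between a fixed subsolution and $0$; hence $u_k \downarrow u$ (or $\uparrow$, depending on sign conventions) pointwise to a limit $u \in C(V)$ solving \eqref{eq11} on all of $V$. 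The maximality of $u$ follows because $0$ is the largest possible supersolution consistent with the boundary behavior: any other solution $v$ with $\lim_{d(x)\to\infty} v(x) = 0$ must satisfy $v \le u_k$ on each $\Omega_k$ by the comparison principle (using that $v \le 0 + o(1)$ near $\delta\Omega_k$ and letting the error be absorbed), hence $v \le u$.

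To upgrade the pointwise limit to the asserted decay — namely $\lim_{d(x)\to\infty} u(x) = 0$ and $u \in l^{2p+2}(V)$ — I would test the equation against $u$ itself (or against truncations), using the Green-type identity \eqref{gri}: integrating $\Delta u \cdot u$ against the measure gives $-E_V(u) = \int_V \lambda e^u(e^u-1)^{2p+1} u \, d\mu + (\text{finite Dirac contribution})$. Since $u \le 0$ and $t \mapsto \lambda e^t(e^t-1)^{2p+1} t$ is comparable to $|t|^{2p+2}$ near $0$ and bounded below by a positive multiple of $|t|$ times $|e^u - 1|^{2p+1} \asymp |u|^{2p+1}$ for $u$ bounded, one obtains $E_V(u) + c\|u\|_{l^{2p+2}(V)}^{2p+2} \le C$, giving both finiteness of the Dirichlet energy and the $l^{2p+2}$ bound; the $l^{2p+2}$ summability then forces $u(x) \to 0$ as $d(x) \to \infty$.

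The main obstacle I anticipate is making the limiting and comparison arguments fully rigorous on the infinite graph: specifically, (i) constructing a good global subsolution that decays at infinity (so that the monotone limit is nontrivial yet bounded, and the $l^{2p+2}$ estimate closes), which likely requires an explicit barrier built from the discrete Green's function on $\mathbb{Z}^n$ and exploiting $n \ge 2$ for its decay/transience properties; and (ii) justifying that the energy estimate obtained on each $\Omega_k$ passes to the limit uniformly, so that $\|u_k\|_{l^{2p+2}(V)}$ stays bounded independently of $k$. Handling the Dirac terms carefully — either by a fixed smooth substitution as above or by working on $V \setminus \{p_1,\dots,p_M\}$ — and verifying the discrete maximum principle applies to the linearized operator $\Delta - \lambda\, \partial_t[e^t(e^t-1)^{2p+1}]$ on each box will be the technical heart of the monotone iteration.
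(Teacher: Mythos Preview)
Your overall architecture --- monotone iteration from the supersolution $0$ on finite boxes, comparison to get monotonicity in the exhaustion, and then passing to the limit --- matches the paper. The reference-function subtraction and the global subsolution built from a discrete Green's function are unnecessary (and the latter is problematic: $\mathbb{Z}^2$ is recurrent, so no decaying Green's function exists there). The paper never constructs a subsolution; it runs the one-sided iteration $(\Delta-\Lambda)u_k=\lambda e^{u_{k-1}}(e^{u_{k-1}}-1)^{2p+1}+h-\Lambda u_{k-1}$ starting from $u_0=0$, gets a decreasing sequence, and then proves directly that this sequence is bounded in $l^{2p+2}$ by a constant independent of the box.

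The genuine gap in your proposal is precisely obstacle~(ii), and your suggested fix does not work. Testing the equation against $u$ gives
\[
E_\Omega(u)+\lambda\sum_{x\in\Omega} e^{u}(e^{u}-1)^{2p+1}u = -\sum_{x\in\Omega} h\,u,
\]
but the term $e^{u}(e^{u}-1)^{2p+1}u$ is \emph{not} comparable to $|u|^{2p+2}$ for large negative $u$: as $u\to-\infty$ it tends to $0$, not to $+\infty$, so there is no coercivity and the estimate cannot close without an a priori lower bound on $u$ --- which is exactly what you lack. The paper avoids this by working instead with the energy functional
\[
J_\Omega(u)=\tfrac12 E_\Omega(u)+\sum_{x\in\Omega}\Bigl[\tfrac{\lambda}{2p+2}(e^{u}-1)^{2p+2}+h\,u\Bigr],
\]
showing (via concavity of $t\mapsto \tfrac{\lambda}{2p+2}(e^t-1)^{2p+2}-\tfrac{\Lambda}{2}t^2$ on $t\le0$) that $J_\Omega(u_k)$ decreases along the iteration and is bounded by a constant depending only on $n,\lambda,p,N$. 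The potential term $(e^u-1)^{2p+2}$ is still only bounded (not growing) at $-\infty$, but the paper extracts the needed coercivity through the inequality $1-e^{u}\ge |u|/(1+|u|)$ together with the discrete Gagliardo--Nirenberg--Sobolev inequality $\|u\|_{l^{4p+4}}^{2p+2}\lesssim |u|_{1,2}\|u\|_{l^{4p+2}}^{2p+1}$: after some algebra these combine to yield $\|u_k\|_{l^{2p+2}(\Omega)}\le C(J_\Omega(u_k)+1)\le C_1$ with $C_1$ independent of $\Omega$. This uniform bound is the real engine of the proof, and it is what your outline is missing.
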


In our proof, we employ methods from \cite{MR1320569,hua2023existence}. First, we  establish an iterative scheme, which yields a monotone sequence $\{u_k\}$ addressing the Dirichlet problem. Subsequently, we introduce the functional $J_{\Omega}(u)$ and demonstrate that $J_{\Omega}(u_k)$ is uniformly bounded. By using the discrete Gagliardo-Nirenberg-Sobolev inequality, we ensure that the sequence $\{\|u_k\|_{l^{2p+2}(\Omega)}\}$ is uniformly bounded. As we pass to the limit, we ascertain a solution on  $\Omega$. Utilizing the exhaustion method, we extend this solution to $V$. Additionally, we  establish the maximality of the solution.

	\section{Proof of Theorem \ref{th1}}

Initially, we recall the maximum principle. It a foundational result on graphs.

\begin{lemma}\label{fil} \label{le1}\cite{hua2023existence}
	Assume $\Omega$ is a finite subset of $V$. Let $g$ belong to $C(\bar{\Omega})$ with $g>0$. If  $f \in C(\bar{\Omega})$ satisfies these conditions:
	\[
	\begin{cases}
		(\Delta - g) f \geqslant 0 & \text{on } \Omega, \\
	f\leqslant 0 & \text{on } \partial \Omega.
	\end{cases}
	\]
	Then, $f$ must be non-positive throughout $\bar{\Omega}$.
\end{lemma}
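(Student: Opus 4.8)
The plan is to argue by contradiction, exploiting the fact that $\bar{\Omega}$ is a \emph{finite} set so that $f$ necessarily attains its maximum there. Suppose the conclusion fails, i.e.\ $f$ is positive somewhere on $\bar{\Omega}$. Then
\[
m := \max_{\bar{\Omega}} f > 0,
\]
and, by finiteness of $\bar{\Omega}$, this maximum is attained at some vertex $x_0 \in \bar{\Omega}$. The first substantive step is to locate $x_0$: since $f \leq 0$ on $\partial\Omega$ while $f(x_0) = m > 0$, the maximizer cannot lie on the boundary, and therefore $x_0 \in \Omega$. This is precisely where the boundary hypothesis enters.

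Next I would examine the discrete Laplacian at $x_0$. By the definition of $\partial\Omega$ and of the closure $\bar{\Omega} = \Omega \cup \partial\Omega$, every neighbor $y \sim x_0$ of an interior vertex $x_0 \in \Omega$ must itself lie in $\bar{\Omega}$; consequently $f(y) \leq m = f(x_0)$ for each such $y$. Using the definition $\Delta f(x_0) = \sum_{y \sim x_0}\bigl(f(y) - f(x_0)\bigr)$, every summand is non-positive, so
\[
\Delta f(x_0) = \sum_{y \sim x_0}\bigl(f(y) - f(x_0)\bigr) \leq 0.
\]
Combining this with $g(x_0) > 0$ and $f(x_0) = m > 0$ yields
\[
(\Delta - g)f(x_0) = \Delta f(x_0) - g(x_0)\, f(x_0) \leq -\,g(x_0)\, f(x_0) < 0,
\]
which contradicts the hypothesis $(\Delta - g)f \geq 0$ on $\Omega$. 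Hence no positive value of $f$ can occur, and $f \leq 0$ throughout $\bar{\Omega}$, as claimed.

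The argument is short, and the step I would flag as the crux is the localization of the maximizer together with the observation that all neighbors of an \emph{interior} vertex remain in $\bar{\Omega}$: this is what guarantees that the discrete Laplacian at $x_0$ compares $f(x_0)$ only against values not exceeding it, forcing $\Delta f(x_0) \leq 0$. The strict positivity of $g$ then plays the decisive role of turning this non-strict inequality into a strict contradiction; if $g$ were merely allowed to vanish the conclusion could fail, so this hypothesis cannot be dropped without modification.
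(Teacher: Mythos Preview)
Your argument is correct and is precisely the standard maximum-principle proof one would expect. The paper itself does not supply a proof of this lemma---it is quoted as a known result from \cite{hua2023existence}---so there is nothing further to compare.
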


We then apply the maximum principle to construct an iterative sequence.
Define $\Omega_0$ as a finite subset of $V$ encompassing $\{p_1, p_2, \ldots, p_M\}$. Additionally, define $\Omega$ as a connected, finite subset such that $\Omega_0 \subset \Omega \subset V$.
 Set $h = 4 \pi \sum_{j=1}^M n_j \delta_{p_j}$ and define $N = 4 \pi \sum_{j=1}^M n_j$. 
Select a constant $\Lambda > (2p+2)\lambda > 0$, initialize $u_0 = 0$, and proceed with the iterative scheme:
	
	\begin{equation}\label{ite}
	\left\{
	\begin{array}{l}
		\left(\Delta - \Lambda\right) u_k = \lambda e^{u_{k-1}}\left (e^{u_{k-1}} - 1\right)^{2p+1} + h - \Lambda u_{k-1} \text{ on } \Omega, \\
		u_k = 0 \text{ on } \partial \Omega.
	\end{array}
	\right.
	\end{equation}

	\begin{lemma}\label{le3}
		Consider the sequence $\{u_k\}$ established in (\ref{ite}). It follows that each $u_k$ in the sequence is uniquely determined and adheres to the order
		\[
		0 = u_0 \geqslant u_1 \geqslant u_2 \geqslant \cdots.
		\]
	\end{lemma}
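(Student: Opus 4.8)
The plan is to prove existence and uniqueness of each $u_k$ first, then establish the monotonicity $u_{k-1} \geq u_k$ by induction, with the maximum principle (Lemma~\ref{le1}) doing the heavy lifting at every stage.

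\medskip

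\textbf{Step 1: Existence and uniqueness of $u_k$.}
For each fixed $k$, the scheme \eqref{ite} is a \emph{linear} Dirichlet problem for $u_k$ on the finite set $\Omega$: the right-hand side depends only on the already-known function $u_{k-1}$. The operator $\Delta - \Lambda$ acting on functions vanishing on $\partial\Omega$ is a finite-dimensional linear map, so existence is equivalent to injectivity. If $v \in C(\bar\Omega)$ satisfies $(\Delta - \Lambda)v = 0$ on $\Omega$ and $v = 0$ on $\partial\Omega$, then since $\Lambda > 0$ we may take $g \equiv \Lambda$ in Lemma~\ref{le1} to conclude $v \leq 0$; applying the same argument to $-v$ gives $v \geq 0$, hence $v \equiv 0$. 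Therefore $\Delta - \Lambda$ is invertible on this space and each $u_k$ is uniquely determined.

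\medskip

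\textbf{Step 2: The base case $u_1 \leq u_0 = 0$.}
With $u_0 = 0$ we have $e^{u_0}(e^{u_0}-1)^{2p+1} = 0$, so \eqref{ite} reads $(\Delta - \Lambda)u_1 = h \geq 0$ on $\Omega$ and $u_1 = 0$ on $\partial\Omega$. Since $h \geq 0$, this gives $(\Delta - \Lambda)u_1 \geq 0$ on $\Omega$, so Lemma~\ref{le1} (again with $g \equiv \Lambda$) yields $u_1 \leq 0$ on $\bar\Omega$. Thus $u_0 \geq u_1$.

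\medskip

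\textbf{Step 3: The inductive step.}
Assume $u_{k-2} \geq u_{k-1}$ on $\bar\Omega$ (with $u_{-1}$ interpreted so the case $k=2$ is covered by Step 2); we show $u_{k-1} \geq u_k$. Set $w = u_k - u_{k-1}$. Subtracting the equations for $u_k$ and $u_{k-1}$ in \eqref{ite}:
\[
(\Delta - \Lambda) w = \lambda\bigl(e^{u_{k-1}}(e^{u_{k-1}}-1)^{2p+1} - e^{u_{k-2}}(e^{u_{k-2}}-1)^{2p+1}\bigr) - \Lambda(u_{k-1}-u_{k-2}) \quad \text{on } \Omega,
\]
and $w = 0$ on $\partial\Omega$. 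The key point is to show the right-hand side is $\geq 0$. Write $\varphi(t) = \lambda e^{t}(e^{t}-1)^{2p+1} - \Lambda t$. Then the right-hand side equals $\varphi(u_{k-1}) - \varphi(u_{k-2})$. Since $u_{k-1} \leq u_{k-2} \leq 0$ by the inductive hypothesis and Step 2, it suffices to check that $\varphi$ is \emph{non-increasing} on $(-\infty, 0]$, so that $u_{k-1} \leq u_{k-2}$ forces $\varphi(u_{k-1}) \geq \varphi(u_{k-2})$. Computing $\varphi'(t) = \lambda e^{t}(e^{t}-1)^{2p}\bigl((2p+2)e^{t}-1\bigr) - \Lambda$; for $t \leq 0$ we have $0 < e^t \leq 1$, so $e^t(e^t-1)^{2p} \leq 1$ and $(2p+2)e^t - 1 \leq 2p+1 < 2p+2$, giving $\varphi'(t) \leq (2p+2)\lambda - \Lambda < 0$ by the choice $\Lambda > (2p+2)\lambda$. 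Hence $\varphi' < 0$ on $(-\infty,0]$, the right-hand side is $\geq 0$, i.e. $(\Delta-\Lambda)w \geq 0$ on $\Omega$ with $w \leq 0$ on $\partial\Omega$, and Lemma~\ref{le1} gives $w \leq 0$, i.e. $u_k \leq u_{k-1}$. This closes the induction.

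\medskip

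\textbf{Main obstacle.}
The only genuinely delicate point is the monotonicity calculation in Step 3 — specifically verifying that $\varphi(t) = \lambda e^t(e^t-1)^{2p+1} - \Lambda t$ is non-increasing on $(-\infty, 0]$, which is exactly where the hypothesis $\Lambda > (2p+2)\lambda$ is used and where the sign of $(e^t - 1)^{2p+1}$ (negative for $t<0$, since the exponent is odd) must be tracked carefully through the product rule. Everything else is a routine application of the maximum principle, once one notices that the scheme has been linearized precisely so that each step is a linear problem with a sign-definite forcing term.
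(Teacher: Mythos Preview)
Your proof is correct and follows essentially the same route as the paper: induction combined with the maximum principle (Lemma~\ref{le1}), with the key derivative bound $\lambda e^{t}(e^{t}-1)^{2p}[(2p+2)e^{t}-1] < \Lambda$ on $(-\infty,0]$ driving the monotonicity step. The only noteworthy difference is that for existence and uniqueness of each $u_k$ the paper cites an external variational lemma, whereas you give a more self-contained argument via finite-dimensionality plus injectivity of $\Delta-\Lambda$ from the maximum principle; your version is cleaner and avoids the external reference.
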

	
	\begin{proof}
		First, we establish the following equation  for $u_1$:
		\begin{equation}\label{dir}
		\begin{cases}
			\left(\Delta - \Lambda\right) u_1 = h & \text{ on } \Omega, \\
			u_1 = 0 & \text{ on } \partial \Omega.
		\end{cases}
	\end{equation}
	By the variation method in Lemma 2.2 in \cite{houkong2022existence}, we find that (\ref{dir}) yields a unique solution.	Utilizing Lemma \ref{le1}, it follows that $u_1 \leqslant 0$. 
	
	Assuming that
		\[
		0 = u_0 \geqslant u_1 \geqslant u_2 \geqslant \cdots \geqslant u_i,
		\]
		and given that
		\[
		\lambda e^{u_i}\left(e^{u_i} - 1\right)^{2p+1} + h - \Lambda u_i \in l^2(\Omega),
		\]
		we can also guarantee the existence and uniqueness of $u_{i+1}$ by the variation method again. 
		
		Analyzing the iterative equation (\ref{ite}), we derive that
		\[
		\begin{aligned}
			\left(\Delta-\Lambda\right)\left(u_{i+1}-u_i\right) &= \lambda e^{u_{i}}\left (e^{u_{i}} - 1\right)^{2p+1}-\lambda e^{u_{i-1}}\left (e^{u_{i-1}} - 1\right)^{2p+1} -\Lambda(u_i-u_{i-1}) \\
			&\geqslant \{ \lambda e^{\xi}\left(e^{\xi}-1\right)^{2p}\left[(2p+2)e^{\xi}-1\right]- \Lambda\}(u_i - u_{i-1}) \\
			&\geqslant \Lambda(e^{2 \xi} - 1)(u_i - u_{i-1})\\ &\geqslant 0,
		\end{aligned}
		\]
	where \(\xi\) denotes a function constrained such that \(u_i \leqslant \xi\leqslant u_{i-1}\). It  ensures that \( u_i\geq u_{i+1} \), thus validating the lemma in accordance with Lemma \ref{le1}.

	\end{proof}

Next, we define the natural functional $J_{\Omega}(u)$ and demonstrate that $J_{\Omega}(u_k)$ decreases as $k$ increases. We introduce the following functional defined over $\Omega$: 

\begin{equation}\label{efd}
J_{\Omega}(u) = \frac{1}{2} E_{\Omega}(u) + \sum_{x \in \Omega} \left[ \frac{\lambda}{2p+2} (e^{u(x)} - 1)^{2p+2} + h(x) u(x) \right].
\end{equation}

\begin{lemma}\label{iul}
	Define the sequence $\{u_k\}$ as specified in Eq.(\ref{ite}). The sequence satisfies the decreasing inequality:
	\[
	C \geqslant J_{\Omega}(u_1) \geqslant J_{\Omega}(u_2) \geqslant\cdots \geqslant J_{\Omega}(u_k) \geqslant \cdots,
	\]
	where $C$ is a constant determined by the parameters $n$, $\lambda$, $p$, and $N$.
	
\end{lemma}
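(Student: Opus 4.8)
The plan is to prove two things: first, that the functional is non-increasing along the iteration, $J_{\Omega}(u_k) \geq J_{\Omega}(u_{k+1})$; second, that the first term $J_{\Omega}(u_1)$ is bounded above by a constant depending only on $n$, $\lambda$, $p$, and $N$.

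For the monotonicity, I would compare $J_{\Omega}(u_k)$ and $J_{\Omega}(u_{k+1})$ by writing $J_{\Omega}(u_{k+1}) - J_{\Omega}(u_k)$ and estimating it from above. The natural first move is to test the iteration equation \eqref{ite} (with index $k+1$) against the difference $v := u_{k+1} - u_k$, which vanishes on $\partial\Omega$, and use the summation-by-parts identity \eqref{gri}. This produces $-E_{\Omega}(u_{k+1}, v) - \Lambda \sum_{\Omega} u_{k+1} v = \sum_{\Omega}\big[\lambda e^{u_k}(e^{u_k}-1)^{2p+1} + h - \Lambda u_k\big] v$, so that $E_{\Omega}(u_{k+1}, v) = -\sum_{\Omega}\big[\lambda e^{u_k}(e^{u_k}-1)^{2p+1} + h\big] v - \Lambda\sum_{\Omega} v^2$. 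On the other hand, expanding $\tfrac12 E_{\Omega}(u_{k+1}) - \tfrac12 E_{\Omega}(u_k) = E_{\Omega}(u_{k+1}, v) - \tfrac12 E_{\Omega}(v)$, and using convexity of $t \mapsto \tfrac{\lambda}{2p+2}(e^t-1)^{2p+2}$ (whose derivative is $\lambda e^t(e^t-1)^{2p+1}$) to bound $\tfrac{\lambda}{2p+2}(e^{u_{k+1}}-1)^{2p+2} - \tfrac{\lambda}{2p+2}(e^{u_k}-1)^{2p+2} \leq \lambda e^{u_{k+1}}(e^{u_{k+1}}-1)^{2p+1} v$ — wait, convexity gives the lower bound with the derivative at $u_k$; I would instead use the mean-value form and the monotonicity $u_k \geq u_{k+1}$ established in Lemma \ref{le3} together with the sign of $v \leq 0$ to push the cross terms the right way. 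Assembling these, the terms involving $\lambda e^{u_k}(\cdots)$ and $h$ cancel against the linear part, leaving $J_{\Omega}(u_{k+1}) - J_{\Omega}(u_k) \leq -\tfrac12 E_{\Omega}(v) - \Lambda\sum_{\Omega}v^2 + (\text{a remainder})$, and the remainder is controlled to be non-positive precisely by the choice $\Lambda > (2p+2)\lambda$, the same condition that drove the monotonicity in Lemma \ref{le3}. I expect this bookkeeping — getting every convexity estimate pointed in the direction that makes the remainder non-positive — to be the main obstacle.

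For the upper bound $J_{\Omega}(u_1) \leq C$, I would use the explicit linear problem \eqref{dir} for $u_1$. Testing \eqref{dir} against $u_1$ gives $E_{\Omega}(u_1) = -\Lambda\sum_{\Omega} u_1^2 - \sum_{\Omega} h u_1 \leq -\sum_{\Omega} h u_1$, and since $u_1 \leq 0$ and $h \geq 0$ this is $\leq N \|u_1\|_{l^\infty}$ at worst; more carefully, $\sum_{\Omega} h u_1 = \sum_j 4\pi n_j u_1(p_j)$. Then $J_{\Omega}(u_1) = \tfrac12 E_{\Omega}(u_1) + \sum_{\Omega}\big[\tfrac{\lambda}{2p+2}(e^{u_1}-1)^{2p+2} + h u_1\big]$; the middle sum is supported on $\Omega_0$ (since $h$ is) plus the $(e^{u_1}-1)^{2p+2}$ term which, because $u_1 \leq 0$, satisfies $0 \leq (e^{u_1}-1)^{2p+2} \leq 1$ and is again effectively controlled where $u_1$ is not small — and one shows $u_1$ is bounded below pointwise by a constant depending on $N$ via the maximum principle (Lemma \ref{le1}) comparing with a fixed barrier (e.g. the solution of $(\Delta - \Lambda)w = h$ on all of $V$, or a Green's-function-type estimate). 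The dependence on $n$ enters through this barrier estimate on $\mathbb{Z}^n$. Since all these bounds on $u_1$ are uniform in $\Omega$, collecting them yields $J_{\Omega}(u_1) \leq C(n,\lambda,p,N)$, completing the proof.
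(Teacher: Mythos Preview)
Your monotonicity argument is essentially the paper's: test the iteration against $v=u_{k+1}-u_k$, integrate by parts, and use that $\Lambda>(2p+2)\lambda$ absorbs the second-order remainder. One correction: the function $F(t)=\tfrac{\lambda}{2p+2}(e^t-1)^{2p+2}$ is \emph{not} convex on $(-\infty,0]$ (compute $F''(t)=\lambda e^t(e^t-1)^{2p}[(2p+2)e^t-1]$, which changes sign at $t=-\ln(2p+2)$), so your first instinct fails as you suspected. What does work is exactly your fallback: the Taylor remainder $F(u_{k+1})-F(u_k)-F'(u_k)v=\tfrac12 F''(\xi)v^2$ with $\xi\le 0$, and the uniform bound $F''(\xi)\le (2p+1)\lambda<\Lambda$ then gives $J_\Omega(u_{k+1})-J_\Omega(u_k)\le -\tfrac12 E_\Omega(v)-\tfrac12(\Lambda-\sup F'')\sum v^2\le 0$. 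The paper phrases this as concavity of $\eta(t)=F(t)-\tfrac{\Lambda}{2}t^2$; same content.

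Your upper bound for $J_\Omega(u_1)$ has a real gap. The term $\sum_{x\in\Omega}(e^{u_1(x)}-1)^{2p+2}$ is summed over an \emph{arbitrarily large} $\Omega$, so neither the pointwise bound $(e^{u_1}-1)^{2p+2}\le 1$ nor a uniform $l^\infty$ lower bound on $u_1$ from a barrier gives anything uniform in $|\Omega|$. To salvage your route you would need the global barrier $w$ solving $(\Delta-\Lambda)w=h$ on $V$ to satisfy $\sum_V |w|^{2p+2}<\infty$ (true, via exponential decay of the massive Green's function, but this is extra machinery you have not invoked). The paper avoids all of this with a two-line trick: since $u_1\le 0$ one has $|e^{u_1}-1|\le|u_1|$, hence $\sum_\Omega(e^{u_1}-1)^{2p+2}\le\sum_\Omega u_1^{2p+2}\le\big(\sum_\Omega u_1^2\big)^{p+1}$ by the discrete embedding $l^2\hookrightarrow l^{2p+2}$; and $\sum_\Omega u_1^2$ is bounded by $\|h\|_{l^2(V)}^2/\Lambda^2$ directly from the energy identity you already wrote down. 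Together with $E_\Omega(u_1)\le 4n\|u_1\|_{l^2(\Omega)}^2$ this yields $J_\Omega(u_1)\le C(n,\lambda,p,N)$ without any barrier or Green's function input.
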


\begin{proof}

	By multiplying Eq.(\ref{ite}) by the difference \(u_k - u_{k-1}\) and performing integration  on the domain \(\Omega\), we derive:
	\begin{equation}\label{ito}
		\begin{aligned}
			&\sum_{x \in \Omega} \left[\left(\Delta u_k  - \Lambda u_k\right) \left(u_k- u_{k-1}\right)\right](x) \\
			= &
			\sum_{x \in \Omega} \left[ \lambda e^{u_{k-1}} \left(e^{u_{k-1}} - 1\right)^{2p+1} \left(u_k - u_{k-1}\right) 
		  - \Lambda u_{k-1} \left(u_k - u_{k-1}\right) + h \left(u_k - u_{k-1}\right) \right](x).
		\end{aligned}
	\end{equation}
Utilizing the  identity as specified in (\ref{gri}), we have 
	\begin{equation}
		\sum_{x \in \Omega} \Delta u_k \left(u_k - u_{k-1}\right) = -E_{\Omega}( u_k, u_k - u_{k-1}) =  E_{\Omega}(u_{k}, u_{k-1})-E_{\Omega}(u_k) .
	\end{equation}
Integrating this  with equation (\ref{ito}), we conclude 
	\begin{equation}\label{2.4}
		\begin{aligned}
		&E_{\Omega}(u_k) - E_{\Omega}( u_k, u_{k-1})  + \sum_{x \in \Omega} \Lambda \left(u_k - u_{k-1}\right)^2  \\
		&=	-\sum_{x \in \Omega} \left[ \lambda e^{u_{k-1}} \left(e^{u_{k-1}} - 1\right)^{2p+1} \left(u_k - u_{k-1}\right) 
		 + h(x) \left(u_k - u_{k-1}\right) \right].
		\end{aligned}
	\end{equation}

Now we introduce a concave function for $x\leq 0$: 
\[\eta(x)=\frac{\lambda}{2p+2}\left( e^x-1\right)^{2p+2}-\frac{\Lambda}{2}x^2.\]
We see that 
\[\eta\left(u_{k-1}\right)-\eta\left(u_k\right)\geqslant \eta^{\prime}\left(u_{k-1})(u_{k-1}-u_k\right)=\left[\lambda\left(e^{u_{k-1}}-1\right)^{2p+1}e^{u_{k-1}}-\Lambda u_{k-1}\right]\left(u_{k-1}-u_k\right),\]
which yields that 	
\begin{equation}\label{2.5}
\begin{aligned}	
\frac{\lambda}{2p+2}\left(e^{u_k}-1\right)^{2p+2} \leqslant& \frac{\lambda}{2p+2}\left(e^{u_{k-1}}-1\right)^{2p+2}+\frac{\Lambda}{2}\left(u_k-u_{k-1}\right)^2\\
&+\lambda e^{u_{k-1}}\left(e^{u_{k-1}}-1\right)^{2p+1}\left(u_k-u_{k-1}\right) .	
\end{aligned}
\end{equation}

It  follows from (\ref{bfe}) that  
\begin{equation}\label{2.6}
\begin{aligned}
	 \left|E_{\Omega}\left(u_{k}, u_{k-1}\right)\right| \leqslant & \frac{1}{2} \sum_{\substack{x, y \in \bar{\Omega} \\ x \sim y}} \left|(u_k(y) - u_{k}(x))(u_{k-1}(y) - u_{k-1}(x))\right|\\
	 \leqslant &\frac{1}{4} \sum_{\substack{x, y \in \bar{\Omega} \\ x \sim y}} \left[u_k(y) - u_k(x))\right]^2+\frac{1}{4} \sum_{\substack{x, y \in \bar{\Omega} \\ x \sim y}} \left[u_{k-1}(y) - u_{k-1}(x))\right]^2\\
	= & \frac{1}{2} E_{\Omega}\left(u_{k}\right)+\frac{1}{2} E_{\Omega}\left(u_{k-1}\right).
\end{aligned}
\end{equation}

Combining (\ref{2.4}), (\ref{2.5}) and  (\ref{2.6}),  we conclude that 
$$
J_{\Omega}\left(u_k\right) \leqslant J_{\Omega}\left(u_k\right)+\frac{\Lambda}{2}\left\|u_{k-1}-u_k\right\|_{l^2(\Omega)}^2 \leqslant J_{\Omega}\left(u_{k-1}\right) .
$$

Next, we estimate the upper bound for $J_{\Omega}(u_1)$. Observing that
$$
\begin{aligned}
	E_{\Omega}\left(u_1\right) & =\frac{1}{2} \sum_{\substack{x, y \in \bar{\Omega} \\ x \sim y}} \left[u_1(y) - u_1(x))\right]^2\\
	& \leqslant \sum_{\substack{x, y \in \bar{\Omega} \\ x \sim y}}\left(u_1(x)^2+u_1(y)^2\right)\\
	& \leqslant 4n\left\|u_1\right\|_{l^2(\Omega)}^2,
\end{aligned}
$$
and $\left|e^{u_1}-1\right|=1-e^{u_1} \leqslant-u_1$, we get 
$$
\begin{aligned}
	J_{\Omega}\left(u_1\right) & \leqslant \frac{1}{2} \cdot 4n\left\|u_1\right\|_{l^2(\Omega)}^2+\frac{\lambda}{2p+2} \sum_{x \in \Omega} u_1(x)^{2p+2}+\frac{1}{2} \sum_{x \in \Omega}\left[h(x)^2+u_1(x)^2\right] \\
	& =c_1+c_2\left(\left\|u_1\right\|_{l^2(\Omega)}^2+\left\|u_1\right\|^{2p+2}_{l^{2p+2}(\Omega)}\right),
\end{aligned}
$$
where constants $c_1$ and $c_2$ are determined only by the parameters $n$, $p$, $\lambda$, and $N$.

Upon multiplying Eq. (\ref{dir}) by $u_1$ and performing a summation over $\Omega$, we obtain
$$
E_{\Omega}(u_1) + \Lambda \sum_{x \in \Omega} u_1^2 = -\sum_{x \in \Omega} h u_1.
$$
From this, we deduce
$$
\Lambda \sum_{x \in \Omega} u_1(x)^2 \leqslant \frac{1}{2\Lambda} \sum_{x \in \Omega} h(x)^2 + \frac{\Lambda}{2} \sum_{x \in \Omega} u_1(x)^2.
$$
Therefore,
$$
\sum_{x \in \Omega} u_1(x)^2 \leqslant \frac{\|h\|_{\ell^2(V)}^2}{\Lambda^2},
$$
leading to
$$
\sum_{x \in \Omega} u_1(x)^{2p+2} \leqslant \left(\sum_{x \in \Omega} u_1(x)^2\right)^{p+1} \leqslant \left(\frac{\|h\|_{\ell^2(V)}^2}{\Lambda^2}\right)^{p+1}.
$$
We thus conclude that $J_{\Omega}(u_1) \leq C$, where $C$ is dependent only on $n$, $\lambda$, $p$, and $N$, thereby completing the proof.

\end{proof}

Next, by applying the discrete Gagliardo-Nirenberg-Sobolev inequality and invoking Lemma \ref{iul}, we determine the upper bound for $\|u_k\|_{l^{2p+2}(\Omega)}$. The foundation for this application is drawn from the proof presented in Theorem 4.1 in \cite{MR4095474}:

\begin{lemma} \cite{MR4095474}
	Assuming $n \geqslant 2, q > 1, \gamma \geqslant q$, and $q^{\prime} = \frac{q}{q-1}$, the inequality
	\[
	\|u\|_{l^{\frac{\gamma n}{n-1}}(V)}^{\gamma} \leq C(q, n, \gamma) |u|_{1,q} \|u\|^{\gamma-1}_{l^{(\gamma-1)q^{\prime}}(V)}.
	\]
	is satisfied for any function $u \in l^q(V)$.
\end{lemma}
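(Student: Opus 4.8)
The plan is to deduce the inequality from its endpoint case $q=\gamma=1$ by a bootstrap. One first establishes the discrete Gagliardo--Nirenberg--Sobolev inequality
\begin{equation}\label{gnsbase}
\|v\|_{l^{n/(n-1)}(V)}\le |v|_{1,1}
\end{equation}
for every finitely supported $v\in C(V)$, and then applies \eqref{gnsbase} to $v=|u|^{\gamma}$ together with H\"older's inequality. The reduction to finitely supported functions is routine: assuming the right-hand side of the asserted inequality is finite (otherwise there is nothing to prove), set $u_R=\phi_R u$, where $\phi_R\in C(V)$ equals $1$ on $\{x:d(x)\le R\}$, vanishes outside $\{x:d(x)\le 2R\}$, and satisfies $\max_{y\sim x}|\phi_R(y)-\phi_R(x)|\le 1/R$; then $|u_R|\le|u|$ while $|u_R|_{1,q}\le|u|_{1,q}+R^{-1}\left(\sum_{x\in V}\deg(x)|u(x)|^q\right)^{1/q}$, so that, since $u\in l^q(V)$, letting $R\to\infty$ and combining Fatou's lemma on the left with dominated convergence on the right recovers the general case from the finitely supported one.

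\textbf{The endpoint inequality.} Let $e_1,\dots,e_n$ be the standard basis of $\mathbb{Z}^n$. Fix a finitely supported $v$ and an index $i$; telescoping $v$ along the $i$-th coordinate line yields, for every $x\in V$,
\[
|v(x)|\le\sum_{t\in\mathbb{Z}}\left|v(x+(t+1-x_i)e_i)-v(x+(t-x_i)e_i)\right|=:g_i(\hat x_i),
\]
a quantity independent of the coordinate $x_i$, where $\hat x_i=(x_1,\dots,x_{i-1},x_{i+1},\dots,x_n)$. Hence $|v(x)|^{n/(n-1)}\le\prod_{i=1}^{n}g_i(\hat x_i)^{1/(n-1)}$; summing over $x\in\mathbb{Z}^n$ and applying the discrete Loomis--Whitney inequality (iterated H\"older with exponent $n-1$ in each variable, proved by induction on $n$) gives
\[
\sum_{x\in V}|v(x)|^{n/(n-1)}\le\prod_{i=1}^{n}\left(\sum_{\hat x_i}g_i(\hat x_i)\right)^{1/(n-1)}.
\]
Since $\sum_{\hat x_i}g_i(\hat x_i)=\sum_{x\in V}|v(x+e_i)-v(x)|\le|v|_{1,1}$ for each $i$, raising to the power $(n-1)/n$ produces \eqref{gnsbase}.

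\textbf{The bootstrap.} Apply \eqref{gnsbase} to $v=|u|^{\gamma}$, using $\| |u|^{\gamma}\|_{l^{n/(n-1)}(V)}=\|u\|_{l^{\gamma n/(n-1)}(V)}^{\gamma}$. For the right-hand side, the elementary bound $\big| |a|^{\gamma}-|b|^{\gamma}\big|\le\gamma\left(|a|^{\gamma-1}+|b|^{\gamma-1}\right)|a-b|$, valid for $\gamma\ge1$ and hence for $\gamma\ge q>1$, gives
\[
\big| |u|^{\gamma}\big|_{1,1}\le\gamma\sum_{x\in V}\sum_{y\sim x}\left(|u(y)|^{\gamma-1}+|u(x)|^{\gamma-1}\right)|u(y)-u(x)|.
\]
Applying H\"older's inequality over the oriented edges with exponents $q$ and $q'$, the first factor is exactly $|u|_{1,q}$, while the second, after $\left(|u(y)|^{\gamma-1}+|u(x)|^{\gamma-1}\right)^{q'}\le 2^{q'}\left(|u(y)|^{(\gamma-1)q'}+|u(x)|^{(\gamma-1)q'}\right)$ and $\deg(x)\le 2n$, is bounded by $C(n,q)\|u\|_{l^{(\gamma-1)q'}(V)}^{\gamma-1}$. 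Collecting the estimates yields the stated inequality, with a constant of the form $C(q,n,\gamma)=\gamma\cdot C(n,q)$.

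\textbf{The main obstacle.} The conceptual core, and the step requiring the most care, is the endpoint inequality \eqref{gnsbase}: one must set up the pointwise product bound for $|v(x)|^{n/(n-1)}$ and then invoke the discrete Loomis--Whitney inequality with the correct exponents, all the while bookkeeping edge multiplicities so that each slice sum $\sum_{\hat x_i}g_i(\hat x_i)$ is genuinely dominated by $|v|_{1,1}$. The truncation step and the tracking of constants in the bootstrap are standard, and the inequality $\big| |a|^{\gamma}-|b|^{\gamma}\big|\le\gamma\left(|a|^{\gamma-1}+|b|^{\gamma-1}\right)|a-b|$ follows from the mean value theorem together with $\big| |a|-|b|\big|\le|a-b|$.
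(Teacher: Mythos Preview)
The paper does not supply its own proof of this lemma; it is quoted verbatim from \cite{MR4095474} and used as a black box (the surrounding text only says ``The foundation for this application is drawn from the proof presented in Theorem~4.1 in \cite{MR4095474}''). Your argument is the standard Gagliardo route: the $l^1$ endpoint via telescoping along coordinate lines and the discrete Loomis--Whitney/iterated H\"older inequality, followed by the substitution $v=|u|^{\gamma}$ and an edgewise H\"older with exponents $(q,q')$. This is essentially the proof of Theorem~4.1 in \cite{MR4095474} written out in the lattice setting, and it is correct. Two cosmetic remarks: in the truncation step you tacitly use that $|u|_{1,q}<\infty$ whenever $u\in l^{q}(V)$ on $\mathbb{Z}^n$, which follows from $|u(y)-u(x)|^{q}\le 2^{q-1}(|u(y)|^{q}+|u(x)|^{q})$ and $\deg\equiv 2n$; and the convexity bound should read $2^{\,q'-1}$ rather than $2^{\,q'}$, though this only affects the constant.
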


Lemma 2.1 in \cite{MR3277179} implies that 

$$
\|u\|_{l ^{p^{\prime\prime}}\left(V\right)} \leqslant\|u\|_{l ^{p^{\prime}}\left(V\right)},
$$
for any $p^{\prime \prime }\geqslant p^{\prime}$.
Letting $\gamma=2(p+1)$, $q=2$ and $q^{\prime}=2$, we have for $u\in l^{2}(V)$, 
\begin{equation}\label{por}
	\|u\|_{l^{4p+4}\left(V\right)} \leqslant\|u\|_{l^{\frac{2 n(p+1)}{n-1}}\left(V\right)} \leqslant C(n,p)|u|_{1,2}^{\frac{1}{2p+2}}\|u\|_{l^{4p+2}\left(V\right)}^{\frac{2p+1}{2p+2}}.
\end{equation}

\begin{lemma}
	\label{ulp} 
	Define the sequence $\{u_k\}$ as specified in Eq.(\ref{ite}). For any index $k \geq 1$, the following inequality holds:
	\begin{equation}
		\|u_k\|_{l^{2p+2}(\Omega)} \leq C_2\left(J_{\Omega}(u_k) + 1\right) \leq C_1,
	\end{equation}
	where constants $C_2$ and $C_1$ are determined exclusively by the parameters $n$, $\lambda$, $p$, and $N$.
\end{lemma}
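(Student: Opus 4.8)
The plan is to bound $\|u_k\|_{l^{2p+2}(\Omega)}$ by $J_\Omega(u_k)$ uniformly in $k$ and $\Omega$, and then to invoke Lemma \ref{iul}. First I would collect a few reductions. By Lemma \ref{le3}, $u_k\le 0$ on $\bar\Omega$ and $u_k\equiv 0$ on $\partial\Omega$, so extending $u_k$ by zero to all of $V$ gives a finitely supported function with $\|u_k\|_{l^q(V)}=\|u_k\|_{l^q(\Omega)}$ for every $q\ge 1$ and $|u_k|_{1,2}^2=2E_\Omega(u_k)$. Since $h=4\pi\sum_{j}n_j\delta_{p_j}\ge 0$ has total mass $N$ and $u_k\le 0$, one has $-N\|u_k\|_{l^\infty(V)}\le\sum_{x\in\Omega}h(x)u_k(x)\le 0$, and on the finite set $\Omega$ we also have $\|u_k\|_{l^\infty}\le\|u_k\|_{l^{2p+2}}$. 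Substituting these into the definition \eqref{efd} of $J_\Omega$ yields
\[
\tfrac12 E_\Omega(u_k)+\tfrac{\lambda}{2p+2}\sum_{x\in\Omega}\bigl(e^{u_k}-1\bigr)^{2p+2}\le J_\Omega(u_k)+N\|u_k\|_{l^{2p+2}(\Omega)} .
\]
Writing $t:=\|u_k\|_{l^{2p+2}(\Omega)}$, $S:=\sum_{x\in\Omega}(e^{u_k}-1)^{2p+2}$ and $E:=E_\Omega(u_k)$, this gives in particular $E\le 2\bigl(J_\Omega(u_k)+Nt\bigr)$ and $S\le\tfrac{2p+2}{\lambda}\bigl(J_\Omega(u_k)+Nt\bigr)$.

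The heart of the argument is a discrete Sobolev-type estimate tying $t$ to $E$ and $S$. I would split $\Omega=\{|u_k|<1\}\cup\{|u_k|\ge 1\}$. On the first set the elementary inequality $1-e^{s}\ge(1-e^{-1})|s|$ for $s\in[-1,0]$ gives both $\sum_{\{|u_k|<1\}}|u_k|^{2p+2}\le(1-e^{-1})^{-(2p+2)}S$ and the cardinality bound $\#\{|u_k|\ge 1\}\le(1-e^{-1})^{-(2p+2)}S$. On the second set I would introduce the truncation $w:=(|u_k|-1)_+$: it is finitely supported with $\operatorname{supp}w\subseteq\{|u_k|>1\}$, the map $u_k\mapsto w$ is $1$-Lipschitz so $|w|_{1,2}\le|u_k|_{1,2}$, and $|u_k|^{2p+2}\le 2^{2p+2}(1+w^{2p+2})$ there. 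Applying \eqref{por} to $w$, interpolating $\|w\|_{l^{4p+2}(V)}$ between $\|w\|_{l^{2p+2}(V)}$ and $\|w\|_{l^{4p+4}(V)}$, and then using H\"older's inequality on $\operatorname{supp}w$ together with $|w|_{1,2}^2=2E_\Omega(w)\le 2E$, I would arrive at $\|w\|_{l^{2p+2}(V)}^{2p+2}\le C(n,p)\,E^{p+1}\,\#\{|u_k|>1\}$. Combining the two pieces with the cardinality bound gives
\[
t^{2p+2}\le C(n,p)\,S\,\bigl(1+E^{p+1}\bigr).
\]

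Finally I would close the estimate by absorption: inserting the bounds $E\le 2(J_\Omega(u_k)+Nt)$ and $S\le\tfrac{2p+2}{\lambda}(J_\Omega(u_k)+Nt)$ into the last display yields $t^{2p+2}\le C\bigl(J_\Omega(u_k)+Nt+1\bigr)^{p+2}$; since $p+2<2p+2$ when $p\ge 1$, a Young-type absorption of the term $Nt$ produces $t\le C_2\bigl(J_\Omega(u_k)+1\bigr)$ with $C_2=C_2(n,\lambda,p,N)$ (the borderline case $p=0$ being treated separately, by a sharper version of the same scheme). Feeding in $J_\Omega(u_k)\le J_\Omega(u_1)\le C$ from Lemma \ref{iul} then gives $t\le C_2(C+1)=:C_1$, with $C_1$ depending only on $n,\lambda,p,N$. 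The step I expect to be the main obstacle is precisely this Sobolev-type inequality and its absorption: on the infinite lattice no Poincar\'e inequality holds for finitely supported functions, so the Dirichlet energy alone cannot control $\|u_k\|_{l^{2p+2}}$, and it is the term $\sum_\Omega(e^{u_k}-1)^{2p+2}$ in $J_\Omega$ — via the cardinality bound $\#\{|u_k|\ge 1\}\lesssim S$ — that supplies the missing ingredient making \eqref{por} usable on the truncation $w$; carrying this through while keeping every constant independent of $\Omega$ is the delicate point.
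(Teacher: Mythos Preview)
Your route differs from the paper's in how the potential term is exploited. You split $\Omega$ into $\{|u_k|<1\}$ and $\{|u_k|\ge 1\}$, work with the truncation $w=(|u_k|-1)_+$, and close the loop via the exponent mismatch $p+2<2p+2$. The paper instead keeps the global inequality $1-e^{u_k}\ge\frac{|u_k|}{1+|u_k|}$, first proves
\[
\|u_k\|_{l^{2p+2}(\Omega)}\le C_{10}\Bigl[1+\sum_{\Omega}\Bigl(\tfrac{|u_k|}{1+|u_k|}\Bigr)^{2p+2}+E_\Omega(u_k)\Bigr],
\]
and handles the source term by $\bigl|\sum_\Omega h\,u_k\bigr|\le\|h\|_{l^{(4p+4)/(4p+3)}}\|u_k\|_{l^{4p+4}}$ together with \eqref{foe} and a Young inequality carrying a free parameter $\epsilon$; choosing $\epsilon$ small against $C_{10}$ yields the bound for \emph{every} $p\ge0$.

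The gap in your scheme is precisely the case $p=0$, which you wave away in a parenthesis. When $p=0$ your closing inequality becomes $t^{2}\le C(n,\lambda)\,(J_\Omega(u_k)+Nt+1)^{2}$, i.e.\ $t\le\sqrt{C}\,(J_\Omega(u_k)+Nt+1)$, and nothing in your argument forces $\sqrt{C}\,N<1$: the exponent mismatch you rely on has collapsed, so the term $Nt$ cannot be absorbed. A ``sharper version of the same scheme'' is not evident here --- your bound $\bigl|\sum h\,u_k\bigr|\le N\|u_k\|_{l^\infty}\le Nt$ is already the tightest pointwise estimate, and the Sobolev step on $w$ is saturated at $p=0$ (it yields $\|w\|_{l^2}^2\le C\,\#\{|u_k|>1\}\,E$, nothing stronger). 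To rescue $p=0$ you must alter the treatment of $\sum h\,u_k$, e.g.\ pair $h$ with a higher $l^q$-norm of $u_k$ and feed that through \eqref{por} with a small prefactor; at that point you are essentially reproducing the paper's $\epsilon$-absorption. For $p\ge1$ your argument is sound and the truncation device is a pleasant alternative to the paper's $\tfrac{|u|}{1+|u|}$ weight, but as written the lemma is not established for $p=0$.
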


\begin{proof}
	
	Define $\tilde{u}_k$ as the null extension of $u_k$ across $V$:
	\begin{equation}\label{exd}
		\tilde{u}_k(x) = \left\{
		\begin{array}{l@{\quad\text{on}\ }l}
			u_k(x) & \Omega, \\
			0 & \Omega^c.
		\end{array}
		\right.
	\end{equation}
	It is evident that $\tilde{u}_k \in l^{2}(V)$. By (\ref{por}), the following inequality holds: 
		$$
	\|\tilde{u}_k\|_{l^{4p+4}(V)}^{2p+2} \leqslant (C(n,p))^{2p+2} |\tilde{u}_k|_{1,2} \|\tilde{u}_k\|_{l^{4p+2}(V)}^{2p+1} .
	$$
	Referring to (\ref{exd}), we obtain:
	$$
	\begin{aligned}
		\|\tilde{u}_k\|_{l^{4p+4}(V)}^{4p+4} &= \sum_{x \in \Omega} u_k(x)^{4p+4}, \\
		\|\tilde{u}_k\|_{l^{4p+2}(V)}^{4p+2} &= \sum_{x \in \Omega} u_k(x)^{4p+2},
	\end{aligned}
	$$
	and
	$$
	|\tilde{u}_k|_{1,2} \leqslant \left(2 E_{\Omega}(u_k)\right)^{\frac{1}{2}}.
	$$
	Thus, it follows that:
	\begin{equation}\label{foe}
	\sum_{x \in \Omega} u_k(x)^{4p+4} \leqslant C_3 E_{\Omega}(u_k) \sum_{x \in \Omega} u_k(x)^{4p+2},
	\end{equation}
	where $C_3 = 2(C(n,p))^{4p+4}$.
	
	Using the identity $1-e^{u_k} = 1-e^{-|u_k|} = \frac{e^{|u_k|}-1}{e^{|u_k|}} \geq \frac{|u_k|}{1+|u_k|}$, and (\ref{efd}), we deduce:
	
	$$
	\begin{aligned}
		J_{\Omega}(u_k)&=\frac{1}{2} E_{\Omega}(u_k)+\sum_{x \in \Omega}\left[\frac{\lambda}{2p+2}\left(e^{u_k(x)}-1\right)^{2p+2}+h(x) u_k(x)\right] \\
		& \geqslant \frac{1}{2} E_{\Omega}(u_k)+\frac{\lambda}{2p+2} \sum_{x \in \Omega}\left(\frac{|u_k(x)|}{1+|u_k(x)|}\right)^{2p+2}-\|h\|_{l^{\frac{4p+4}{4p+3}}\left(\Omega\right)}\|u_k\|_{l^{4p+4}(\Omega)} \\
		& \geqslant \frac{1}{2} E_{\Omega}(u_k)+\frac{\lambda}{2p+2} \sum_{x \in \Omega}\left(\frac{|u_k(x)|}{1+|u_k(x)|}\right)^{2p+2}-C_4\left(E_{\Omega}(u_k)\right)^{\frac{1}{4p+4}}\left(\sum_{x \in \Omega} u_k(x)^{4p+2}\right)^{\frac{1}{4p+4}},  
	\end{aligned}
	$$	
	where $C_4 $ is  a uniform constant solely based on $n$, $p$, and $N$

Let $\epsilon>0$ is a constant to be chosen later. By Yung's inequality, we have 
\[	\begin{aligned} &C_4\left(E_{\Omega}(u_k)\right)^{\frac{1}{4p+4}}\left(\sum_{x \in \Omega} u_k(x)^{4p+2}\right)^{\frac{1}{4p+4}}\\
	& =\left[ C_4\epsilon^{-\frac{2p+1}{2p+2}} \left(\frac{2p+2}{2p+1}\right)^{-\frac{2p+1}{2p+2}}\left(E_{\Omega}(u_k)\right)^{\frac{1}{4p+4}}\right]\left[\epsilon^{\frac{2p+1}{2p+2}} \left(\frac{2p+2}{2p+1}\right)^{\frac{2p+1}{2p+2}}\left(\sum_{x \in \Omega} u_k(x)^{4p+2}\right)^{\frac{1}{4p+4}}\right]\\
	&\leq \epsilon \|\tilde{u}_k\|_{l^{4p+2}(V)}+C_5 E_{\Omega} (u_k)^{\frac{1}{2}}\\
	&\leq \epsilon \|\tilde{u}_k\|_{l^{2p+2}(V)} +\frac{1}{4}E_{\Omega} (u_k)+C_6,
\end{aligned}\]
where $C_5$ and $C_6$ are solely influenced by $\epsilon$, $n$, $N$, and $p$.  

Hence, we obtain 
\begin{equation}\label{igi}	
	\begin{aligned}
	J_{\Omega}(u_k)
	\geqslant &\frac{1}{2} E_{\Omega}(u_k)+\frac{\lambda}{2p+2} \sum_{x \in \Omega}\left(\frac{|u_k(x)|}{1+|u_k(x)|}\right)^{2p+2}- \epsilon \|\tilde{u}_k\|_{l^{2p+2}(V)} -\frac{1}{4}E_{\Omega} (u_k)-C_6\\
	=&\frac{1}{4} E_{\Omega}(u_k)+\frac{\lambda}{2p+2} \sum_{x \in \Omega}\left(\frac{|u_k(x)|}{1+|u_k(x)|}\right)^{2p+2}- \epsilon \|u_k\|_{l^{2p+2}(\Omega)} -C_6.
\end{aligned}
\end{equation}

	Using the inequality given in (\ref{foe}), the subsequent estimate is derived:
	$$
		\begin{aligned}
			& \left(\sum_{x \in \Omega} u_k(x)^{2p+2}\right)^{2} = \left[\sum_{x \in \Omega} \left(\frac{|u_k(x)|}{1 + |u_k(x)|} \right)^{p+1} (1 + |u_k(x)|)^{p+1} |u_k(x)|^{p+1} \right]^{2} \\
			& \leqslant \sum_{x \in \Omega} \left(\frac{|u_k(x)|}{1 + |u_k(x)|}\right)^{2p+2} \sum_{x \in \Omega} (1 + |u_k(x)|)^{2p+2} u_k(x)^{2p+2} \\
			& \leqslant 2^{2p+2} \sum_{x \in \Omega} \left(\frac{|u_k(x)|}{1 + |u_k(x)|}\right)^{2p+2} \sum_{x \in \Omega} \left(u_k(x)^{2p+2} + u_k(x)^{4p+4}\right) \\
			& \leqslant 2^{2p+2} \sum_{x \in \Omega} \left(\frac{|u_k(x)|}{1 + |u_k(x)|}\right)^{2p+2} \sum_{x \in \Omega} u_k(x)^{2p+2} + 2^{2p+2}C_3\sum_{x \in \Omega} \left(\frac{|u_k(x)|}{1 + |u_k(x)|}\right)^{2p+2} E_{\Omega}(u_k)\sum_{x \in \Omega} u_k(x)^{4p+2} \\
			&\leq 2^{2p+2} \sum_{x \in \Omega} \left(\frac{|u_k(x)|}{1 + |u_k(x)|}\right)^{2p+2} \sum_{x \in \Omega} u_k(x)^{2p+2} + 2^{2p+2}C_3\sum_{x \in \Omega} \left(\frac{|u_k(x)|}{1 + |u_k(x)|}\right)^{2p+2} E_{\Omega}(u_k)\left(\sum_{x \in \Omega} u_k(x)^{2p+2}\right)^{\frac{2p+1}{p+1}}\\
			 &\leq  \frac{1}{4} \left(\sum_{x \in \Omega} u_k(x)^{2p+2}\right)^{2}
	          + C_7\left[\left(\sum_{x \in \Omega} \left(\frac{|u_k(x)|}{1 + |u_k(x)|}\right)^{2p+2}\right)^2+\sum_{x \in \Omega} \left(\frac{|u_k(x)|}{1 + |u_k(x)|}\right)^{2p+2} E_{\Omega}(u_k)\left(\sum_{x \in \Omega} u_k(x)^{2p+2}\right)^{\frac{2p+1}{p+1}}\right]\\
			 &\leq  \frac{1}{2} \left(\sum_{x \in \Omega} u_k(x)^{2p+2}\right)^{2} + C_8\left[\left(\sum_{x \in \Omega} \left(\frac{|u_k(x)|}{1 + |u_k(x)|}\right)^{2p+2}\right)^2+\left(\sum_{x \in \Omega} \left(\frac{|u_k(x)|}{1 + |u_k(x)|}\right)^{2p+2}\right)^{2p+2} E_{\Omega}(u_k)^{2p+2}\right]\\
			& \leqslant \frac{1}{2} \left(\sum_{x \in \Omega} u_k(x)^{2p+2}\right)^2 + C_9 \left[1 + \left(\sum_{x \in \Omega} \left(\frac{|u_k(x)|}{1 + |u_k(x)|}\right)^{2p+2}\right)^{4p+4} + E_{\Omega}(u_k)^{4p+4}\right],
		\end{aligned}
$$
which  results in 
\begin{equation}\label{ukp}
\|u_k\|_{l^{2p+2}(\Omega)} \leqslant C_{10}\left[1+\sum_{x \in \Omega}\left(\frac{|u_k(x)|}{1+|u_k(x)|}\right)^{2p+2}+E_{\Omega}(u_k)\right],
\end{equation}
where $C_7$-$C_{10}$ are constants depending only on $n$ and $p$.

Selecting $\epsilon=\frac{\min \left\{\frac{1}{8}, \frac{\lambda}{4p+4}\right\}}{C_{10}}$ and integrating the results from (\ref{igi}) and (\ref{ukp}), we obtain

$$
\|u_k\|_{l^{2p+2}(\Omega)} \leqslant C_2(J_{\Omega}(u_k)+1) .
$$
Furthermore, by Lemma \ref{iul}, we conclude 
$$
\|u_k\|_{l^{2p+2}(\Omega)} \leqslant C_2(J_{\Omega}(u_k)+1)\leq C_1,
$$	 
where 	$C_2$ and $C_1$ depend only on $n$, $\lambda$, $p$ and  $N$. 
\end{proof}

The boundedness of $\|u_k\|_{l^{2p+2}(\Omega)}$ ensures that Eq.(\ref{cho}) has a solution.

\begin{lemma}\label{mle}
	Define \(\Omega\) as a finite subset of \(V\) encompassing  points $\{p_1, p_2, \ldots, p_M\}$. There exists a function \(u_{\Omega}\) to the following problem:
	\begin{equation}\label{cho}
	\begin{cases}
		\Delta u = \lambda e^u (e^u - 1)^{2p+1} + h, & \text{in } \Omega, \\
		u(x) = 0, & \text{on } \delta \Omega.
	\end{cases}
\end{equation}
	This solution is a maximal  across all alternatives. Additionally, we have  \(\|u_{\Omega}\|_{l^{2p+2}(\Omega)} \leq C_1\), with \(C_1\) determined exclusively by \(n\), \(\lambda\), \(p\), and \(N\).
\end{lemma}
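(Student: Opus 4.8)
The plan is to obtain $u_{\Omega}$ as the pointwise limit of the monotone sequence $\{u_k\}$ constructed in \eqref{ite}, and then to verify in turn that this limit solves \eqref{cho}, that it satisfies the asserted bound, and that it dominates every competing solution. By Lemma \ref{le3} the sequence is non-increasing, $0 = u_0 \geq u_1 \geq \cdots$, and by Lemma \ref{ulp} it obeys $\|u_k\|_{l^{2p+2}(\Omega)} \leq C_1$ uniformly in $k$; since $\Omega$ is finite this is also a uniform $l^{\infty}(\Omega)$ bound, so for each $x \in \bar{\Omega}$ the values $u_k(x)$ form a decreasing sequence bounded from below. Hence $u_{\Omega}(x) := \lim_{k \to \infty} u_k(x)$ defines a finite function on $\bar{\Omega}$ with $u_{\Omega} = 0$ on $\partial\Omega$, and, $\bar{\Omega}$ being finite, $\|u_{\Omega}\|_{l^{2p+2}(\Omega)} = \lim_{k\to\infty}\|u_k\|_{l^{2p+2}(\Omega)} \leq C_1$. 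Because $\Delta$ is a finite linear combination of point evaluations and $t \mapsto \lambda e^{t}(e^{t}-1)^{2p+1} - \Lambda t$ is continuous, letting $k \to \infty$ in \eqref{ite} gives $(\Delta - \Lambda) u_{\Omega} = \lambda e^{u_{\Omega}}(e^{u_{\Omega}}-1)^{2p+1} + h - \Lambda u_{\Omega}$ on $\Omega$; that is, $u_{\Omega}$ solves \eqref{cho}.

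For maximality, let $v \in C(\bar{\Omega})$ be any solution of \eqref{cho}. First I would show $v \leq 0$ on $\bar{\Omega}$: since $2p+1$ is odd, $(e^{t}-1)^{2p+1}$ carries the same sign as $t$, so the function $g := \lambda e^{v}(e^{v}-1)^{2p+1}/v$, set equal to any positive constant at the vertices where $v$ vanishes, is strictly positive on $\bar{\Omega}$ and satisfies $(\Delta - g)v = h \geq 0$ on $\Omega$ with $v = 0$ on $\partial\Omega$; Lemma \ref{le1} then forces $v \leq 0 = u_0$. Next I would prove $u_k \geq v$ for all $k$ by induction on $k$. Granting $u_{k-1} \geq v$ and subtracting the equation satisfied by $v$ from \eqref{ite} yields $(\Delta - \Lambda)(u_k - v) = \phi(u_{k-1}) - \phi(v)$, where $\phi(t) := \lambda e^{t}(e^{t}-1)^{2p+1} - \Lambda t$. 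For $t \leq 0$ one has $0 < e^{t} \leq 1$ and $0 \leq (e^{t}-1)^{2p} \leq 1$, whence $\phi'(t) = \lambda e^{t}(e^{t}-1)^{2p}[(2p+2)e^{t} - 1] - \Lambda \leq (2p+1)\lambda - \Lambda < 0$ by the standing choice $\Lambda > (2p+2)\lambda$; thus $\phi$ is strictly decreasing on $(-\infty, 0]$. Since $v \leq u_{k-1} \leq 0$, this gives $\phi(u_{k-1}) - \phi(v) \leq 0$, i.e. $(\Delta - \Lambda)(v - u_k) \geq 0$ on $\Omega$ with $v - u_k = 0$ on $\partial\Omega$; applying Lemma \ref{le1} with $g \equiv \Lambda$ gives $v \leq u_k$. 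Passing to the limit yields $v \leq u_{\Omega}$, so $u_{\Omega}$ is a maximal solution, which completes the proof.

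The step I expect to be the main obstacle is the maximality part, and specifically the two ingredients it rests on: that every solution of \eqref{cho} is non-positive (so the induction can be started from $u_0 = 0$), and the monotonicity of $\phi$ on $(-\infty,0]$, which is precisely where the assumption $\Lambda > (2p+2)\lambda$ is needed. Once these are secured, the existence and finiteness of the limit, the bound $\|u_{\Omega}\|_{l^{2p+2}(\Omega)} \leq C_1$, and the passage to the limit in \eqref{ite} are all immediate consequences of Lemmas \ref{le3} and \ref{ulp} together with the finiteness of $\bar{\Omega}$.
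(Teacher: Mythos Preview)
Your argument is correct and follows essentially the same route as the paper: pass to the pointwise limit of the monotone, uniformly bounded iterates from Lemmas~\ref{le3} and~\ref{ulp}, then prove maximality by showing any competitor is $\leq 0$ and running the induction $v\leq u_k$ via the maximum principle. Two small differences worth noting: (i) you deduce $v\leq 0$ by manufacturing a positive potential $g$ and invoking Lemma~\ref{le1}, whereas the paper argues directly at an interior maximum; (ii) the paper actually proves the stronger comparison \eqref{cmp} for \emph{subsolutions} $U$ with $U\leq 0$ on $\partial\Omega$ (not just solutions with zero boundary data), and this stronger form is what is used later to compare $u_{\Omega_l}$ with $u_{\Omega_i}$ in the proof of Theorem~\ref{th1}. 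Your induction step goes through verbatim for subsolutions once you know $U\leq 0$ on $\bar\Omega$, so it would be worth stating the maximality in that generality.
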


\begin{proof}

By applying Lemmas \ref{ulp} and \ref{le3} and noting that $l^{2p+2}(\Omega)$ is finite-dimensional, we get
\[
u_k \rightarrow u_{\Omega} \text{ in } l^{2p+2}(\Omega),
 \]
and
\[
\|u_{\Omega}\|_{l^{2p+2}(\Omega)} \leqslant C_0.
\]
Since the convergence is pointwise, we see that the function \(u_{\Omega}\) adheres to the equation:
\[
\begin{cases}
	\Delta u = \lambda e^u (e^u - 1)^{2p+1} + h, & \text{on } \Omega, \\
	u(x) = 0, & \text{on } \delta \Omega.
\end{cases}
\]

 The remaining task is to demonstrate that this solution is maximal. For a function \(U \in C(\bar{\Omega})\) satisfying
\[
\left\{
\begin{array}{l}
	\Delta U \geqslant \lambda e^U\left(e^U-1\right)^{2p+1} + h \text{ on } \Omega, \\
	U(x) \leqslant 0 \text{ on } \delta\Omega,
\end{array}
\right.
\]
we claim  that
\begin{equation}\label{cmp}
 u_0 \geqslant u_1 \geqslant \cdots \geqslant u_k \geqslant \cdots \geqslant u_{\Omega} \geqslant U.
\end{equation}
Initially, it is noted that
\[
\Delta U \geqslant \lambda e^U\left(e^U-1\right)^{2p+1} + h \geqslant \lambda e^U\left(e^U-1\right)^{2p+1}.
\]
We assert that \(\sup_{x \in \Omega} U(x) \leqslant 0\). Should this not hold, and \(U(x_0) = \sup_{x \in \Omega} U(x) > 0\) for some \(x_0 \in \Omega\), then
\[
0 \geqslant \Delta U(x_0) \geqslant \lambda e^{U(x_0)}\left(e^{U(x_0)}-1\right)^{2p+1}> 0,
\]
resulting in a contradiction and thus establishing the claim.

Assuming \(U \leqslant u_k\), then
\[
\begin{aligned}
	(\Delta-\Lambda)(u_{k+1}-U) & \leqslant \lambda e^{u_k}\left(e^{u_k}-1\right)^{2p+1}-\lambda e^U\left(e^U-1\right)^{2p+1}-\Lambda(u_k-U) \\
	& \leqslant \lambda e^{\xi}\left(e^{\xi}-1\right)^{2p}\left[(2p+2)e^{\xi}-1\right](u_k - U) - \Lambda(u_k - U) \\
	& \leqslant \Lambda\left(e^{2 \xi}-1\right)(u_k-U) \leqslant 0,
\end{aligned}
\]
where \(\xi\) is a function  which lies between  $U$ and $u_k$. This ensures that \(U \leqslant u_{k+1}\) by Lemma \ref{fil}. Hence $U\leq u_{\Omega}$. If $u^{\prime}_{\Omega}$ is a another solution, we conclude that $u^{\prime}_{\Omega}\leq u_{\Omega}$, which means that $u_{\Omega}$ is maximal. 
\end{proof}

Mow, we will prove Theorem 1.1.
Choose a series of finite and connected subsets \(\{\Omega_j\}\) such that $$\quad \bigcup_ {j=1}^{\infty} \Omega_j = V$$ and $$\Omega_0 \subset \Omega_1 \subset \cdots \subset \Omega_i \subset \cdots.$$

These lemmas will be utilized to prove Theorem 1.1. For any integers \(1 \leqslant i \leqslant l\), given that \(\Omega_i \subset \Omega_l\) and observing that \(u_{\Omega_l} \leqslant 0\) on \(\bar{\Omega}_i\), it follows from (\ref{cmp}) that
\[
u_{\Omega_l} \leqslant u_{\Omega_i} \quad \text{on } \bar{\Omega}_i.
\]

Let \(\tilde{u}_{\Omega_i}\) denote the null extension of \(u_{\Omega_i}\) to \(V\). Then, we have
\[
0 \geqslant \tilde{u}_{\Omega_1} \geqslant \tilde{u}_{\Omega_2} \geqslant \cdots \geqslant \tilde{u}_{\Omega_i} \geqslant \cdots
\]
across \(V\). Given that \(\left\|\tilde{u}_{\Omega_i}\right\|_{l^{2p+2}\left(V\right)} \leqslant C_1\) for all \(i \geqslant 1\), we obtain
\[
\tilde{u}_{\Omega_k}(x) \rightarrow \tilde{u}(x), \quad \forall x \in V,
\]
and \(\tilde{u}\in l^{2p+2}(V)\). Consequently, \(\tilde{u}\) satisfies the following equation:
\[
\begin{cases}
	\Delta \tilde{u} = \lambda e^{\tilde{u}} \left(e^{\tilde{u}} - 1\right)^{2p+1} + 4\pi \sum\limits_{j=1}^M n_j\delta_{p_j}, & \text{on } V, \\
	\lim_{d(x) \rightarrow +\infty} \tilde{u}(x) = 0.
\end{cases}
\]
Obviously, $\tilde{u}$ is   topological.

Suppose there exists another topological solution \(\bar{u}\) to (\ref{eq12}). For all \(x \in V\), we will show that \(\bar{u}(x) \leqslant 0\).  Assume, for contradiction, that there exists a point \(x_0 \in V\) such that   \(\bar{u}(x_0) > 0\). Given that \(\lim\limits_{d(x) \rightarrow +\infty} \bar{u}(x) = 0\), there must be a domain \(\Omega_i\) where \(\bar{u}(x_1) = \sup_{x \in \Omega_i} \bar{u}(x) > 0\) for some \(x_1 \in \Omega_i\). Therefore, we find
\[
0 \geq \Delta \bar{u}(x_1) \geqslant \lambda e^{\bar{u}(x_1)} \left(e^{\bar{u}(x_1)} - 1\right)^{2p+1} > 0,
\]
which leads to a contradiction.

Applying (\ref{cmp}) over \(\Omega_i\), we obtain
\[
\bar{u} \leqslant \tilde{u}_{\Omega_i}.
\]
Fix an integer \(l \geqslant 1\) and letting  \(i \geqslant l\), we conclude 
\[
\bar{u}(x) \leqslant \underline{\lim}_{i \rightarrow \infty} \tilde{u}_{\Omega_j}(x) = \tilde{u}(x) \quad 
\]
on $\Omega_l$. Consequently, we have \(\bar{u} \leqslant \tilde{u}\) across \(V\), indicating that  \(\tilde{u}\) is also maximal.

\vskip 30 pt
\noindent{\bf ACKNOWLEDGMENTS}

This work is   partially  supported by  the National Key Research and Development Program of China 2020YFA0713100 and by the National Natural Science Foundation of China (Grant No. 11721101).

\bibliographystyle{plain}
\bibliography{D:/Reference/CHS.bib}	
	
\end{document}